\documentclass{article}
\usepackage[a4paper, lmargin=3cm, rmargin=3cm, marginpar=3.0cm]{geometry}
\usepackage[utf8]{inputenc}
\usepackage{authblk}

\usepackage{amsmath}
\usepackage{mathrsfs}
\usepackage{mathtools}
\usepackage{amssymb}
\usepackage{bbm}
\usepackage{csquotes}

\usepackage{parskip}
\usepackage{physics}
\usepackage{amsthm}
\usepackage[normalem]{ulem}

\usepackage{hyperref}

\DeclareUnicodeCharacter{2113}{$\ell$}
\usepackage[english]{babel}
\usepackage[backend=biber, url=false]{biblatex}

\usepackage{xcolor}
\usepackage{todonotes}

\newtheorem{definition}{\textit{Definition}}
\newtheorem{lemma}{\textbf{Lemma}}
\newtheorem{remark}{\textbf{Remark}}

\newtheorem{theorem}{\textbf{Theorem}}

\allowdisplaybreaks

\DeclarePairedDelimiterX{\set}[1]{\lbrace}{\rbrace}{\def\given{\;\delimsize\vert\;\allowbreak}#1}
\DeclarePairedDelimiterX{\sip}[2]{[}{]}{#1\,\delimsize\vert\,\mathopen{}#2}

\newcommand{\B}{\mathcal{B}}
\newcommand{\di}{\diamond}

\newcommand{\e}{\text{e}}

\newcommand{\F}{\mathcal{F}}
\renewcommand{\H}{\mathcal{H}}
\newcommand{\J}{\mathcal{J}}

\newcommand{\N}{\mathbb{N}}

\newcommand{\R}{\mathbb{R}}
\renewcommand{\S}{\mathcal{S}}

\newcommand\extrafootertext[1]{%
    \bgroup
    \renewcommand\thefootnote{\fnsymbol{footnote}}%
    \renewcommand\thempfootnote{\fnsymbol{mpfootnote}}%
    \footnotetext[0]{#1}%
    \egroup
}

\title{\vspace{-2.5cm}Characterization of the reproducing structure of the Bessel potential spaces beyond $p=2$.}

\author[1]{Tjeerd Jan Heeringa\thanks{t.j.heeringa@utwente.nl}}
\affil[1]{\small{Mathematics of Imaging \& AI, University of Twente, The Netherlands }}
\date{}

\begin{document}

\maketitle

\begin{abstract}
Reproducing kernel Hilbert spaces are uniquely characterized by their kernel, but reproducing kernel Banach spaces (RKBS) are not. However, a characterization of which RKBS admit a given kernel as reproducing kernel is lacking. This work provides such a characterization for the well-known Bessel potential / Matèrn kernel, a widely used covariance kernel for Gaussian processes which is the reproducing kernel of the Bessel potential space $H^{s,2}(\R^d)$ when $s>d/2$. Concretely, this work characterizes the pairs of Bessel potential spaces $H^{u,p}(\R^d),H^{v,q}(\R^d)$ which have this kernel. 
\end{abstract}
\extrafootertext{\textbf{keywords: }Reproducing Kernel Banach Space, RKBS, Bessel potential space, Matèrn kernel, reproducing kernel, Reproducing Kernel Hilbert Space, RKHS}

\section{Introduction}
Reproducing kernel Hilbert spaces (RKHS) are spaces characterized by a reproducing kernel. In these spaces, any point evaluation can be written as an inner-product with this kernel, i.e. if we denote the kernel with $K$, then
\begin{equation}\label{eq:reproducing_kernel_property_rkhs}
    f(x) = \braket{K(x,\cdot)}{f}_\H
\end{equation}
for all $f$ in the RKHS and for all $x$ from the set $X$ over which the RKHS is defined. These kernels $K$ are positive definite and symmetric functions. They can be interpreted as measuring the similarity between two points $x$ and $y$, as the kernel can be expressed as an inner product in the RKHS:
\begin{equation}
    K(x,y) = K(\cdot,y)(x) = \braket{K(x,\cdot)}{K(\cdot,y)}_\H
\end{equation}
This enables results to be explained geometrically. Another benefit of using RKHSs is that optimization problems of the form
\begin{equation}
    \min_{f\in \H}\sum_{n=1}^N\abs{f(x_n)-y_n}^2 +\lambda \norm{f}^2_{\H}
\end{equation}
have a solution of the form
\begin{equation}
    f(x) = \sum_{n=1}^M c_n K(x,x_n)
\end{equation}
with $M\leq N$. Substituting this sparse solution back into the optimization problem turns it from an infinite-dimensional problem into a finite-dimensional one. These properties have lead to various schemes for regression and classification on finite data.

There are at least two benefits to working with non-Hilbert Banach spaces. First, Hilbert spaces have the property that any two of them with the same dimension are isometrically isomorphic. This means that there are limited properties of the spaces that we can leverage to improve our optimization problems. Second, some problems have an intrinsic structure that is unable to be embedded in a Hilbert space.

In 2009, Zhang et al. \cite{zhang_reproducing_2009} introduced reproducing kernel Banach spaces (RKBS) as a reflexive Banach space of functions for which the dual is isometric to a Banach space of functions over the same set with the evaluation functionals being bounded on Banach spaces of functions. In 2013, Song et al. \cite{song_reproducing_2013} defined an RKBS for a space with an $\ell^1$-type norm via admissible kernels. In 2014, Georgiev et al \cite{demyanov_construction_2014} relaxed the requirements on both Banach spaces of function by making them a dual pair instead of one being isometric to the full dual of the other. Their definition is the one used in most recent works, albeit with some minor variations in how specific concepts are named. For more information, we recommend reading the review work by Lin et al. \cite{lin_reproducing_2022}, which shows, among others, how the different RKBS considered in the literature fit into this formulation. 

We mentioned before that RKHS are characterized by their kernel, i.e. every RKHS has a unique kernel and every kernel defines an RKHS. For RKBS, this characterization fails in both directions: A given RKBS pair $(\B,\B^\di)$ may admit multiple kernels, and a given kernel can be the kernel for multiple RKBS pairs $(\B,\B^\di)$. This begs the following question:
\begin{displayquote}
    Given a fixed kernel K, what are possible distinct dual pairs of RKBSs $\B,\B^\di$ which have $K$ as kernel? 
\end{displayquote}
To get an understanding of what dual pairs this could be, we restrict ourselves to a particular one: the Bessel potential kernel $K_s(x,y)=G_{2s}(x-y)$. This kernel is also known as the Matèrn kernel and is widely used a covariance kernel for Gaussian processes. It is the reproducing kernel of the Bessel potential space $H^{s,2}(\R^d)$ for $s>d/2$. Bessel potential spaces $H^{s,p}(\R^d)$ are the Fourier-based generalization of the Sobolev spaces $W^{m,p}(\R^d)$. These are well-studied spaces and as such they provide a good starting point.

\subsection{Related work}
Due to Plancherel, the norm of the Bessel potential space satisfies
\begin{equation}
    \norm{f}_{H^{s,2}(\R^d)}^2 = \int_{\R^d} (1+\abs{\xi}^2)^s\abs{\hat{f}(\xi)}^2 d\xi = \int_{\R^d} \frac{\abs{\hat{f}(\xi)}^2}{\widehat{K_s}(\xi)}d\xi
\end{equation}
Fasshauer et al. \autocite{fasshauer_solving_2015} extend this Fourier-based formulation to the space
\begin{equation}
    \B^p_{s} = \set[\Bigg]{ f \in C(\R^d)\cap \S'(\R^d) \given \norm{f}_{\B^p_{s}}:= \bigg(\int_{\R^d}\frac{|\hat{f}(\xi)|^{p'}}{\widehat{K_{s}}(\xi)}d\xi\bigg)^{1/p'} < \infty}
\end{equation}
for $1<p<\infty$, $s>d/2$ and $p'=p/(p-1)$, and show that $\B^p_{s},\B^{p'}_{s}$ is an RKBS pair with kernel $K_s$. These pairs $\B^p_{s},\B^{p'}_{s}$ are not comparable to pairs $H^{u,p}(\R^d),H^{v,q}(\R^d)$, because we only have
\begin{subequations}
\begin{align}
    \B^p_{s} &\hookrightarrow H^{2s/p',p'}(\R^d) & 1 \leq p \leq 2 \\
    H^{2s/p',p'}(\R^d) &\hookrightarrow \B^p_{s} & 2\leq p < \infty
\end{align}    
\end{subequations}
by the Hausdorff-Young inequality. 

\subsection{Our contribution}
In this work, we fully characterize which $H^{s,p}(\R^d)$ spaces have 
\begin{equation}\label{eq:kernel}
    K_s(x,y)=G_{2s}(x-y)
\end{equation}
as kernel. 

\begin{theorem}\label{th:main_results}
Let $d\in \N$, $u,v,s>0$ and $1\leq p,q\leq \infty$. The spaces $H^{u,p}(\R^d),H^{v,q}(\R^d)$ are an RKBS pair with kernel $K_s$ if and only if
\begin{subequations}
\begin{align}
    \frac{1}{p}+\frac{1}{q} &\geq 1\label{eq:main_p_q_result} \\
    d/p < u &< 2s - d/p'\label{eq:main_u_result} \\
    d/q < v &< 2s - d/q'\label{eq:main_v_result} \\
    u + v &\geq 2s + d(\frac{1}{p}+\frac{1}{q}-1)\label{eq:main_u+v_result}
\end{align}    
\end{subequations}
with the inequality in \eqref{eq:main_u+v_result} being strict when $\min(p,q)=1$ and $\max(p,q)<\infty$, where $p'$ and $q'$ are the Hölder-conjugates of $p$ and $q$.
\end{theorem}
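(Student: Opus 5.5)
We will use the RKBS definition of Georgiev et al. and its three requirements: $\B=H^{u,p}$ and $\B^\di=H^{v,q}$ are Banach spaces of functions on $\R^d$ with bounded point evaluations; they are paired by a bounded bilinear form, here $\langle f,g\rangle=\int \hat f\,\overline{\hat g}\,\widehat{K_s}^{-1}\,d\xi$, i.e. $\langle f,g\rangle=\langle J^{s}f,J^{s}g\rangle_{L^2}$ with $J^s=(1-\Delta)^{s/2}$; and $K_s(x,\cdot)\in\B^\di$, $K_s(\cdot,y)\in\B$ reproduce point values. The proof splits into one condition per requirement, each shown necessary and sufficient. Throughout we write $H^{u,p}=J^{-u}L^p$ and use $G_{2s}=J^{-2s}\delta$ with the known asymptotics: $G_\alpha(x)\sim|x|^{\alpha-d}$ near $0$ for $\alpha<d$ (logarithmic at $\alpha=d$, bounded for $\alpha>d$), and exponential decay at infinity.

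\textbf{Kernel membership.} We have $K_s(\cdot,y)=\tau_yG_{2s}\in H^{u,p}$ iff $J^{u}G_{2s}=G_{2s-u}\in L^p$. By the asymptotics this holds iff $2s-u>0$ and $(2s-u-d)p>-d$, i.e. $u<2s-d/p'$. At $p=\infty$ this reads $u<2s-d$, with the endpoint excluded because the logarithm is unbounded. At $p=1$ it reads $u<2s$. The same computation for $v,q$ gives the upper halves of \eqref{eq:main_u_result}--\eqref{eq:main_v_result}. Once the pairing is bounded, reproduction is formal: $\langle K_s(x,\cdot),g\rangle=\int e^{ix\xi}\hat g\,d\xi=g(x)$, justified on Schwartz functions and extended by density. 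For $p=\infty$ we would instead use duality or a weak-$*$ argument.

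\textbf{Point evaluations.} Boundedness of $f\mapsto f(x)$ on $H^{u,p}$ means $f=G_u*h$ with $h\in L^p$ is bounded pointwise by $\|h\|_p$. This holds iff $G_u\in L^{p'}$, i.e. $u>d/p$, with the endpoint failing as before. For $p=1$ it gives $u>0$ together with $G_u\in L^\infty$, i.e. $u>d$, so the borderline cases have to be checked directly. This yields the lower halves of \eqref{eq:main_u_result}--\eqref{eq:main_v_result}. Necessity follows by testing against $h$ chosen to approximate the dual extremizer.

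\textbf{Boundedness of the pairing.} This is the main obstacle. We need $|\langle J^{2s-u-v}a,b\rangle|\lesssim\|a\|_p\|b\|_q$, which is boundedness of $J^{-(u+v-2s)}:L^p\to L^{q'}$. By the Bessel potential version of Hardy--Littlewood--Sobolev, using Young's inequality with $G_\alpha\in L^r$ iff $\alpha>d(1-1/r)$, together with HLS at the critical exponent, this holds iff $q'\ge p$, which is \eqref{eq:main_p_q_result}, and $\alpha=u+v-2s\ge d(1/p-1/q')=d(1/p+1/q-1)$. Equality is allowed by HLS when $1<p<q'<\infty$. When $p=q'$, equality means $\alpha=0$, which is trivial. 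At the critical exponent with $p=1$ and $q'<\infty$ (that is, $\min(p,q)=1$, $\max<\infty$), HLS fails and the inequality must be strict. The case $p=1$, $q'=\infty$ corresponds to $\alpha=d$ and is excluded by the unbounded logarithm, though it lies outside the stated exception. Necessity is shown by the following tests:
\begin{itemize}
\item $L^p\to L^{q'}$ translation-invariant operators with $q'<p$ vanish, which forces \eqref{eq:main_p_q_result};
\item scaling of localized bumps forces the inequality on $\alpha$;
\item an explicit counterexample at $p=1$ shows the endpoint failure.
\end{itemize}
Care is needed in handling the $p=\infty$ and $q=\infty$ endpoints and in reconciling them with the exact exception clause.
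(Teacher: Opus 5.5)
\textbf{Main gap: the pairing is never shown to be forced.} Your three-part split (point evaluations give the lower bounds, kernel membership gives the upper bounds, boundedness of the pairing gives \eqref{eq:main_p_q_result} and \eqref{eq:main_u+v_result}) matches the paper's. However, the necessity half of the third part does not go through. The theorem asks whether \emph{some} continuous separating bilinear form on $H^{u,p}(\R^d)\times H^{v,q}(\R^d)$ reproduces $K_s$. You fix one form, $\langle f,g\rangle=\int J^{2s}f\,g$. All your necessity tests (H\"ormander's theorem, dilated bumps, the $p=1$ counterexample) are then applied to the operator $J^{-(u+v-2s)}$ coming from that form. This shows only that \emph{your} form is unbounded when \eqref{eq:main_p_q_result} or \eqref{eq:main_u+v_result} fails. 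It does not rule out a different pairing that still reproduces $K_s$.

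You need the uniqueness step the paper proves first in \eqref{eq:bilinear_form1}--\eqref{eq:bilinear_form2}. Here is how it goes.
\begin{itemize}
\item Let $B$ be any continuous pairing with $g(y)=B(K_s(\cdot,y),g)$. Take $\psi\in\S(\R^d)$, multiply by $\psi(y)$ and integrate in $y$.
\item The map $y\mapsto\tau_yG_{2s}$ is continuous into $H^{u,p}$. For $p<\infty$ this is continuity of translation on $L^p$ applied to $G_{2s-u}$. For $p=\infty$ it holds because $G_{2s-u}$ is uniformly continuous when $2s-u>d$. Both facts follow from $K_s(\cdot,y)\in H^{u,p}$, which is part of the hypothesis.
\item Hence the integral passes inside the continuous functional $B(\cdot,g)$, giving $B(G_{2s}*\psi,g)=\int g\psi$.
\item Since $G_{2s}*\psi=J^{-2s}\psi$ and $J^{2s}$ is a bijection of $\S(\R^d)$, you get $B(\phi,g)=\int g\,J^{2s}\phi$ for all $\phi\in\S(\R^d)$ and $g\in H^{v,q}(\R^d)$.
\item Continuity of $B$ then yields $\abs{\langle J^{2s-u-v}a,b\rangle}\lesssim\norm{a}_{L^p}\norm{b}_{L^q}$ for $a\in\S(\R^d)$ and $b\in L^q$.
\end{itemize}
That inequality is all your tests use. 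When $p<\infty$, extend it by density before invoking H\"ormander. When $p=\infty$, use $\phi(\cdot/R)$ with $R\to\infty$, as the paper does. With this step inserted, your argument differs from the paper's only in using Hardy--Littlewood--Sobolev and H\"ormander's theorem in place of the cited Sobolev--Bessel embedding.

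\textbf{Smaller points.}
\begin{itemize}
\item The case $p=q=1$ lies \emph{inside} the exception clause, since $\max(p,q)=1<\infty$. Your strict inequality there (because $G_d\notin L^\infty$) is exactly what the theorem says, so nothing needs reconciling.
\item Your list of critical cases omits $1<p<\infty$, $q=1$ (so $q'=\infty$). There $G_{d/p}*\colon L^p\to L^\infty$ is unbounded because $G_{d/p}\notin L^{p'}$. So the inequality is strict, again matching the exception. Assuming $p\le q$ by symmetry, as the paper does, removes this case.
\item You never check separation. It follows at once from the reproducing property, since an element vanishing at every point is zero.
\item For $q=\infty$ (hence $p=1$), the identity $g(y)=\langle K_s(\cdot,y),g\rangle$ cannot be extended by density in $g$, because $\S(\R^d)$ is not dense in $H^{v,\infty}$. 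Verify it directly instead: with $b=J^vg\in L^\infty$ one gets $\langle K_s(\cdot,y),g\rangle=\int (G_{u+v-2s}*\tau_yG_{2s-u})\,b=\int\tau_yG_v\,b=g(y)$, reading $G_0*$ as the identity when $u+v=2s$.
\end{itemize}
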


\subsection{Structure of the work}
This works starts with a short review of RKBS pairs and Bessel potential spaces. Section~\ref{sec:RKBS} provides a definition of RKBS and describes how to construct an RKBS pair. Section~\ref{sec:Bessel_potential_spaces} defines the Bessel potential spaces $H^{s,p}(\R^d)$ and provides the relevant properties that we need to construct the RKBS pairs over them. Section~\ref{sec:proof} details the proof of our contribution. Section~\ref{sec:conclusion} summarizes the work. 

\section{Reproducing kernel Banach spaces}\label{sec:RKBS}
The function spaces that we will be working with are reproducing kernel Banach spaces. These spaces have the requirement that their elements are functions.
\begin{definition}[Reproducing kernel Banach space]
A reproducing kernel Banach space (RKBS) $\B$ over a set $X$ is a Banach space of functions with bounded function evaluations, i.e. for all $x\in X$
\begin{equation}
    \abs{f(x)} \leq C_x\norm{f}_\B
\end{equation}
holds for all $f\in \B$ with the constant $C_x\geq 0$ depending on $x$ but not on $f$.
\end{definition}
Examples of RKBS are the continuous functions, reproducing kernel Hilbert spaces, Hardy spaces and neural network spaces.

The RKBS contains the "reproducing kernel" part, since function evaluation can be written as pairing of the function with a kernel. When the RKBS is a Hilbert space, this kernel is unique due to Riesz representation theorem. When the RKBS is not a Hilbert space, a second RKBS is needed and the kernel is unique for this RKBS pair.

\begin{definition}\label{def:dual_pair}
A dual pair of Banach spaces $\B,\B^\diamond$ is two Banach spaces $\B$ and $\B^\diamond$ with a continuous bilinear map 
\begin{equation}
    \braket{\cdot}{\cdot}\colon\B^\diamond\times \B\to \R, \;(g,f)\mapsto \braket{g}{f}
\end{equation}
that satisfies
\begin{subequations}\label{eq:separating}
\begin{align}
    \forall f\in \B\backslash\set{0} \;&\exists g\in \B^\diamond\colon \, \braket{g}{f} \neq 0\label{eq:separating_a} \\
    \forall g\in \B^\diamond\backslash\set{0} \;&\exists f\in \B\colon \;\; \braket{g}{f} \neq 0\label{eq:separating_b}
\end{align}    
\end{subequations}
This bilinear map is called the pairing corresponding to $(\B,\B^\diamond)$ or, when no confusion arises, simply the pairing.
\end{definition}

\begin{remark}
The symbol $\di$ is pronounced as diamond.    
\end{remark}

\begin{definition}
Let $X,Y$ be sets. A dual pair of Banach spaces $\B,\B^\di$ is an RKBS pair over $X,Y$ when $\B$ is an RKBS over $X$ and $\B^\di$ is an RKBS over $Y$, and there exists a function $K:X\times Y\to\R$ such that
\begin{subequations}\label{eq:reproducing_kernel}
\begin{align}
    f(x) &= \braket{K(x,\cdot)}{f} \quad \forall f\in\B,x\in X \label{eq:reproducing_kernel_B} \\
    g(y) &= \braket{g}{K(\cdot,y)} \quad \forall g\in\B^\diamond,y\in Y \label{eq:reproducing_kernel_B-di}
\end{align}    
\end{subequations}
called the kernel of $\B,\B^\di$.
\end{definition}

\begin{remark}
We use the symbol $Y$ instead of $X^\diamond$ to signal that there is no need for a bilinear pairing between $X$ and $Y$.    
\end{remark}

\section{Bessel potential spaces}\label{sec:Bessel_potential_spaces}
The Bessel potential space is subspace of the tempered distributions defined by the Bessel potential kernel\cite{grafakos_modern_2014}.

\begin{definition}
The Schwartz space or space of rapidly decaying functions is given by
\begin{equation}
    \S(\R^d) = \set{ f\in C^\infty(\R^d) \given \sup_{x\in\R^d}\abs{x^\alpha\partial^\beta f}<\infty\quad \forall \alpha, \beta \in \N^d}
\end{equation}
Its dual $\S'(\R^d)$ is the space of tempered distributions.
\end{definition}

\begin{definition}
Let $s\in\R$. The Bessel potential of order $s$ is the operator
\begin{equation}\label{eq:J}
    \J_s:\,\S'(\R^d)\to\S'(\R^d),\; u \mapsto \F^{-1}\bigg((1+\abs{\cdot}^2)^{-s/2}\F u\bigg)
\end{equation}
\end{definition}

\begin{definition}
Let $s>0$. The Bessel kernel of order $s$ is the function
\begin{equation}
    G_s:\, \R^d\to\R,\; x\mapsto \frac{1}{2^{(s-2)/2}(2\pi)^{d/2}\Gamma(\frac{s}{2})}K_{(d-s)/2}(\abs{x})\abs{x}^{(s-d)/2} 
\end{equation}
with $K_{\alpha}$ the modified Bessel function of the second kind.
\end{definition}

The potential $\J_\bullet$ is a semigroup and relates to the kernel $G_\bullet$ distributionally by
\begin{equation}\label{eq:J_G-identity}
    \J_s f = G_s\ast f
\end{equation}
where $s>0$ and $f\in \S'(\R^d)$. 

Lemma~\ref{lemma:G_integrability} characterizes the integrability of the Bessel kernel $G_s$. In particular, it shows that $G_s\in L^1(\R^d)$ for $s>d/2$. 
\begin{lemma}\label{lemma:G_integrability}
$G_s \in L^{p'}(\R^d)$ if and only if $s>d/p$.    
\end{lemma}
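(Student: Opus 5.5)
We will prove Lemma~\ref{lemma:G_integrability} by splitting $\R^d$ into the unit ball $B=\set{\abs{x}\leq 1}$ and its complement, and using the standard asymptotics of the modified Bessel function $K_\alpha$. This reduces $\int_{\R^d} G_s^{p'}\,dx$ to two radial integrals whose convergence can be read off directly. Here $p'\in[1,\infty]$; the case $p'=\infty$ (i.e.\ $p=1$) is treated separately as a boundedness statement.

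\textbf{Behaviour away from the origin.} For $r\to\infty$ we use $K_\alpha(r)\sim\sqrt{\pi/(2r)}\,\e^{-r}$, which holds for every real order $\alpha$. Hence $G_s(x)\lesssim \abs{x}^{(s-d-1)/2}\e^{-\abs{x}}$ for $\abs{x}\geq 1$. Since $G_s$ is continuous and positive on $\R^d\setminus\set{0}$, it follows that $G_s\in L^{r}(\R^d\setminus B)$ for every $1\leq r\leq\infty$. So integrability is decided entirely at the origin.

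\textbf{Behaviour near the origin.} We use the small-argument asymptotics of $K_\alpha$, which give three cases:
\begin{itemize}
\item for $\alpha\neq 0$, $K_\alpha(r)\sim \tfrac12\Gamma(\abs{\alpha})(r/2)^{-\abs{\alpha}}$;
\item for $\alpha=0$, $K_0(r)\sim -\log r$.
\end{itemize}
With $\alpha=(d-s)/2$ this yields:
\begin{itemize}
\item if $s<d$, then $G_s(x)\asymp \abs{x}^{-(d-s)/2}\abs{x}^{(s-d)/2}=\abs{x}^{s-d}$;
\item if $s=d$, then $G_s(x)\asymp \log(1/\abs{x})$;
\item if $s>d$, then $G_s(x)\asymp \abs{x}^{(d-s)/2+(s-d)/2}=1$, so $G_s$ is bounded near $0$.
\end{itemize}
All comparisons hold with positive constants as $x\to 0$.

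\textbf{Conclusion.} Using polar coordinates, $\int_B \abs{x}^{(s-d)p'}dx<\infty$ if and only if $(s-d)p'+d>0$, i.e.\ $s>d-d/p'=d/p$. In the case $s<d$ this is exactly the claimed criterion. In the cases $s\geq d$ we have $s\geq d\geq d/p$, with equality $s=d/p$ only when $p=1$ and $s=d$. For $p'<\infty$ the logarithm is in every $L^{p'}_{\mathrm{loc}}$ and a bounded function trivially is, consistent with $s>d/p$. For $p'=\infty$ we need $G_s$ bounded. This holds iff $s>d$, because $s=d$ gives a logarithmic blow-up and $s<d$ a power blow-up; and $s>d$ is exactly $s>d/p$ with $p=1$.

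\textbf{Expected difficulty.} The only delicate point is the bookkeeping at the borderline orders $s=d$ (where $\alpha=0$) and $p=1$. The main obstacle is citing the precise two-sided asymptotics of $K_\alpha$, in particular the lower bounds needed for the ``only if'' direction. We would take these from standard references (e.g.\ Grafakos or Abramowitz--Stegun) rather than reprove them.
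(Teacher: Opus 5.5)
Your proposal is correct and follows essentially the same route as the paper. Both split off a neighbourhood of the origin, use exponential decay at infinity and the two-sided near-origin behaviour $|x|^{s-d}$, $\log(1/|x|)$ or $1$ according to $s<d$, $s=d$, $s>d$, integrate in polar coordinates, and treat $p=1$ separately as a boundedness question. The only differences are that you derive these bounds from the standard asymptotics of $K_\alpha$ instead of citing a ready-made estimate for $G_s$, and that your radial criterion $(s-d)p'+d>0$ is stated correctly, whereas the paper's displayed $(s-d)p'+d-1>0$ is a slip (though it reaches the same conclusion $s>d/p$).
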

\begin{proof}
By Proposition 6.2.2 c) of \cite{hao_lecture_2020}, 
\begin{equation}\label{eq:G_bound_out}
    G_s(x) \lesssim \e^{-\abs{x}/2}
\end{equation}
when $\abs{x}\geq 2$ and 
\begin{equation}\label{eq:G_bound_in}
    G_s(x) \sim 1+\begin{cases}
        \abs{x}^{s-d}+O(\abs{x}^{s-d+2}) & s < d \\
        \log(2/\abs{x})+O(\abs{x}^2) & s=d \\
        O(\abs{x}^{s-d}) & s>d
    \end{cases} \sim \begin{cases}
        \abs{x}^{s-d} & s < d \\
        1+|\log(\abs{x})| & s=d \\
        1 & s>d
    \end{cases}
\end{equation}
when $\abs{x}<2$, where $O$ is a function with the property $O(\abs{t})\leq \abs{t}$ for $t\geq 0$. 

For $p=1$, $p'=\infty$. Since the upper bound in \eqref{eq:G_bound_out} is always finite, $G$ is bounded if and only if the terms squeezing it in \eqref{eq:G_bound_in} are. For $s<d$ and $s=d$, both squeeze terms explodes as they approach the origin. For $s>d$, $G_s$ is squeezed by a constant and thus clearly bounded. Hence, $G\in L^\infty$ if and only if $s>d$.

For $p>1$, $p'<\infty$ and thus the $L^{p'}$-norm is an integral. We split this integral in such a way that
\begin{equation}
    \norm{G_s}_{L^{p'}}^{p'} = \int_{\abs{x}<2} \abs{G_s}^{p'}(x)dx + \int_{\abs{x}\geq 2} \abs{G_s}^{p'}(x)dx 
\end{equation}
The far field doesn't impact the integrability, because 
\begin{equation}
    \int_{\abs{x}\geq 2} \abs{G_s}^{p'}(x)dx \lesssim \int_{\abs{x}\geq 2} \e^{-p'/2\abs{x}}dx < \infty
\end{equation}
for any $s>0$. For the near field, we need to consider the three cases separately. If $s<d$, then
\begin{equation}
    \int_{\abs{x}<2} \abs{G_s(x)}^{p'}dx \sim \int_{\abs{x}<2} \abs{x}^{(s-d)p'}dx \sim \int_0^2 r^{(s-d)p'+d-1} dr
\end{equation}
which converges if and only if
\begin{equation}
    (s-d)p'+d-1 > 0 \iff s > d/p
\end{equation}
If $s=d$, then
\begin{equation}
    \int_{\abs{x}<2} \abs{G_s(x)}^{p'}dx \sim \int_{\abs{x}<2} 1+\abs{\log{\abs{x}}}^{p'}dx \sim \int_0^2 r^{d-1}(1+\abs{\log(r)})^{p'}dr = \int_{-\infty}^{\log(2)} \e^{du}(1+\abs{u})^{p'}du < \infty
\end{equation}
If $s>d$, then
\begin{equation}
    \int_{\abs{x}<2} \abs{G_s(x)}^{p'}dx \sim \int_{\abs{x}<2} dx < \infty
\end{equation}
The $s=d$ and $s>d$ cases are consistent with the claim, because $s\geq d > d/p$. Hence, these three cases combined show the result.
\end{proof}

The Bessel potential space is constructed by applying $\J_s$ to $L^p(\R^d)$. 

\begin{definition}
Let $1 \leq p \leq \infty$ and $s\in\R$. The Bessel potential space is given by
\begin{subequations}
\begin{align}
     H^{s,p}(\R^d) &= \J_{s}L^p(\R^d) = \set{ f\in \S'(\R^d) \given \exists f_0\in L^p(\R^d): \, \J_s f_0 = f} \\
     \norm{f}_{H^{s,p}} &= \norm{\J_{-s}f}_{L^p}\label{eq:bessel_norm}
\end{align}
\end{subequations}
\end{definition}
This space is well-defined and Banach, see \cite{hytonenAnalysisBanachSpaces2016} on p. 449 (mind the sign convention of $\J_s$). It follows from Young's inequality that the functions in $H^{s,p}$ are $L^p(\R^d)$ for $s>0$. 

\begin{lemma}\label{lemma:schwartz_embbedings}
$\S(\R^d)\hookrightarrow H^{s,p}(\R^d) \hookrightarrow \S'(\R^)$. If $p=\infty$, only the second embedding is dense. Otherwise, both embeddings are dense.
\end{lemma}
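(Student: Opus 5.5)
The plan is to transport everything to $L^p$ through the Bessel potential $\J_s$ and use that $\J_\bullet$ is a semigroup acting by a Fourier multiplier. For the first embedding I would show that $\J_{-s}$ maps $\S(\R^d)$ continuously into itself. The reason is that $(1+\abs{\xi}^2)^{s/2}$ is smooth and every derivative of it grows at most polynomially, so multiplication by it preserves $\S(\R^d)$; since the Fourier transform is an isomorphism of $\S(\R^d)$, so is $\J_{-s}$. Hence for $\varphi\in\S(\R^d)$ we get $\J_{-s}\varphi\in\S(\R^d)\subset L^p(\R^d)$. Using the semigroup property, $\varphi=\J_s(\J_{-s}\varphi)\in H^{s,p}(\R^d)$. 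The estimate $\norm{\varphi}_{H^{s,p}}=\norm{\J_{-s}\varphi}_{L^p}\lesssim \sup_x (1+\abs{x})^{d+1}\abs{\J_{-s}\varphi(x)}$ bounds the norm by a Schwartz seminorm of $\J_{-s}\varphi$, and therefore by finitely many seminorms of $\varphi$. This gives continuity.

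For the second embedding, $H^{s,p}\subset\S'$ holds by definition. For continuity, take $\varphi\in\S$ and $f=\J_s f_0$. Duality and the self-adjointness of the radial multiplier give $\langle f,\varphi\rangle=\langle f_0,\J_s\varphi\rangle$. Hölder's inequality then yields $\abs{\langle f,\varphi\rangle}\le\norm{f}_{H^{s,p}}\norm{\J_s\varphi}_{L^{p'}}$, and $\norm{\J_s\varphi}_{L^{p'}}$ is controlled by a seminorm of $\varphi$ by the first step applied with $-s$. So convergence in $H^{s,p}$ implies weak-$*$ convergence in $\S'$.

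For density I would argue as follows.
\begin{itemize}
\item \emph{$\S$ is dense in $H^{s,p}$ for $p<\infty$.} The map $\J_s$ is an isometric isomorphism from $L^p$ onto $H^{s,p}$, and it maps $\S$ onto $\S$. Density of $\S$ in $H^{s,p}$ is therefore equivalent to density of $\S$ in $L^p$, which is standard for $p<\infty$.
\item \emph{Failure of density for $p=\infty$.} The same isometry transports the question to $L^\infty$. There the closure of $\S$ consists of the continuous functions vanishing at infinity, which is a proper subspace; for example the constant $1$ is not in the closure.
\item \emph{$H^{s,p}$ is dense in $\S'$ for all $p$.} Since $\S\subset H^{s,p}$, this reduces to the standard fact that $\S$ is weak-$*$ dense in $\S'$. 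That fact follows by the Hahn--Banach theorem, because $\S$ is reflexive as a Fréchet--Montel space, or more concretely by mollifying and then cutting off.
\end{itemize}

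The main obstacle is making the duality manipulation $\langle \J_s f_0,\varphi\rangle=\langle f_0,\J_s\varphi\rangle$ rigorous. This is the definition of $\J_s$ on $\S'$ via the transpose of the multiplier, and it relies on $\J_s\varphi\in\S$, which was established in the first step.
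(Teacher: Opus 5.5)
Your proof is correct and is essentially the paper's argument: the paper cites Proposition 5.6.4 of Hytönen et al.\ for $p<\infty$, and for $p=\infty$ it remarks that the same steps go through except that density of $\S(\R^d)$ in $H^{s,\infty}(\R^d)$ is lost because $\S(\R^d)$ is not dense in $L^\infty(\R^d)$; you make this same mechanism self-contained by transporting everything through the isometric isomorphism $\J_s\colon L^p(\R^d)\to H^{s,p}(\R^d)$, which maps $\S(\R^d)$ onto itself. Your explicit witness (the constant $1$) and your observation that density in $\S'(\R^d)$ survives at $p=\infty$ because $\S(\R^d)\subset H^{s,\infty}(\R^d)$ fill in exactly what the paper leaves implicit.
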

\begin{proof}
For $p<\infty$, this is Proposition 5.6.4 of \cite{hytonenAnalysisBanachSpaces2016}. For $p=\infty$, we can repeat the same steps, but loose the density of $\S(\R^d)$ into $H^{s,\infty}(\R^d)$, because $\S(\R^d)$ is not dense in $L^\infty(\R^d)$.     
\end{proof}

If $p<\infty$, then the dual spaces can be identified with a Bessel potential space.

\begin{lemma}[Proposition 5.6.7 of \cite{hytonenAnalysisBanachSpaces2016}]\label{lemma:dual}
Let $1\leq p <\infty$. $(H^{s,p}(\R^d))^\ast = H^{-s,p'}(\R^d)$.
\end{lemma}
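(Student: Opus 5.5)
Proof plan for Lemma~\ref{lemma:dual}, the duality $(H^{s,p}(\R^d))^\ast = H^{-s,p'}(\R^d)$ for $1\le p<\infty$. The idea is to transport the classical duality $(L^p)^\ast=L^{p'}$ through the Bessel potential. By definition, $\J_{s}\colon L^p(\R^d)\to H^{s,p}(\R^d)$ is surjective, and it is isometric by \eqref{eq:bessel_norm}. It is injective because $\J_{-s}\J_s=\mathrm{id}$ on $\S'(\R^d)$, by the semigroup property. So $\J_s$ is an isometric isomorphism, and the same holds for $\J_{-s}\colon L^{p'}(\R^d)\to H^{-s,p'}(\R^d)$. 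The candidate pairing between $g\in H^{-s,p'}$ and $f\in H^{s,p}$ is
\begin{equation*}
\langle g, f\rangle := \int_{\R^d} (\J_{s} g)(x)\,(\J_{-s} f)(x)\,dx .
\end{equation*}
Here $\J_s g\in L^{p'}$ and $\J_{-s}f\in L^p$, so the integral is finite and H\"older gives $|\langle g,f\rangle|\le \norm{g}_{H^{-s,p'}}\norm{f}_{H^{s,p}}$.

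The key steps, in order, are as follows.

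\textbf{Step 1: boundedness.} The H\"older estimate above shows that every $g\in H^{-s,p'}$ defines a bounded functional on $H^{s,p}$.

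\textbf{Step 2: isometry.} Let $\Lambda\in (H^{s,p})^\ast$. Then $\Lambda\circ\J_s\in (L^p)^\ast$. Since $1\le p<\infty$, this is represented by a unique $h\in L^{p'}$ with $\norm{h}_{L^{p'}}=\norm{\Lambda}$. Setting $g=\J_{-s}h$ gives $\Lambda=\langle g,\cdot\rangle$. Hence the map $g\mapsto\langle g,\cdot\rangle$ is a surjective isometry onto $(H^{s,p})^\ast$.

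\textbf{Step 3: distributional consistency.} We check that this identification agrees with the distributional pairing. For $f,\varphi\in\S$, the symbol $(1+|\xi|^2)^{\pm s/2}$ is smooth with polynomially bounded derivatives, so $\J_{\pm s}$ preserves $\S$. Since this symbol is real and even, $\J_{\pm s}$ is symmetric, which gives
\begin{equation*}
\int (\J_s g)(\J_{-s}f) = g(f) \quad \text{for } g\in\S' \text{ and } f\in\S,
\end{equation*}
where the left side is read distributionally. By Lemma~\ref{lemma:schwartz_embbedings}, $\S$ is dense in $H^{s,p}$ for $p<\infty$, so the functional is determined by its action on $\S$. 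It therefore coincides with the tempered distribution $g$ itself, and the identification is canonical rather than an abstract isomorphism.

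\textbf{Main obstacle.} The delicate point is Step 3 when $p=1$ and $p'=\infty$. In that case $\J_s g\in L^\infty$ need not decay, so the symmetry identity cannot be obtained from Plancherel. I would first prove it for $g\in\S$ via Fourier transform. I would then extend it to general $g$ by noting that both sides are continuous in $g$ for the weak-$*$ topology of $L^\infty$ against the fixed $L^1$ function $\J_{-s}f$, together with weak-$*$ density of $\S$ in $L^\infty$. Elsewhere, density fails only at $p=\infty$, which is excluded, so the remaining steps are routine consequences of $L^p$ duality.
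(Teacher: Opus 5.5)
Your proof is correct, and it is the standard transport argument behind the cited Proposition 5.6.7; the paper gives no proof of its own beyond that citation. You transport $(L^p)^\ast=L^{p'}$ through the isometric isomorphisms $\J_{s}\colon L^p\to H^{s,p}$ and $\J_{-s}\colon L^{p'}\to H^{-s,p'}$, and then use density of $\S$ to check that the identification agrees with the $\S'$--$\S$ pairing.

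You can drop the ``main obstacle'' paragraph, because the obstacle does not exist. The identity $\langle \J_s g,\varphi\rangle_{\S',\S}=\langle g,\J_s\varphi\rangle_{\S',\S}$ holds for every $g\in\S'$ directly from the duality definition of $\F$ on $\S'$ and the evenness of the symbol, with no Plancherel involved. When $\J_s g\in L^\infty$, the left-hand side is the Lebesgue integral by definition of the embedding $L^\infty\hookrightarrow\S'$. The weak-$\ast$ density detour is therefore redundant. It is also circular as stated: the weak-$\ast$ continuity of $h\mapsto(\J_{-s}h)(f)$ on $L^\infty$, which you would need for the right-hand side, is itself this transpose identity.
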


Additionally, the functions in $H^{s,p}(\R^d)$ are RKBS, when the parameter $s$ is sufficiently high.

\begin{lemma}\label{lemma:bessel_rkbs}
$H^{s,p}(\R^d)$ is an RKBS if and only if $s>d/p$. 
\end{lemma}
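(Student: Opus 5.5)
We prove the two directions separately. In both, every $f\in H^{s,p}(\R^d)$ is written as $f=\J_s f_0=G_s\ast f_0$ with $f_0\in L^p(\R^d)$, and by \eqref{eq:bessel_norm} we have $\norm{f}_{H^{s,p}}=\norm{f_0}_{L^p}$.

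\textbf{Sufficiency.} Assume $s>d/p$. By Lemma~\ref{lemma:G_integrability}, $G_s\in L^{p'}(\R^d)$, and so do all of its translates $G_s(x-\cdot)$. Fix $x\in\R^d$. By Hölder's inequality the integral
\begin{equation*}
    f(x)=\int_{\R^d} G_s(x-y)f_0(y)\,dy
\end{equation*}
converges absolutely and satisfies
\begin{equation*}
    \abs{f(x)}\le \norm{G_s}_{L^{p'}}\norm{f_0}_{L^p}=\norm{G_s}_{L^{p'}}\norm{f}_{H^{s,p}}.
\end{equation*}
Thus the constant $C_x=\norm{G_s}_{L^{p'}}$ does not even depend on $x$. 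One point needs checking: the distribution $G_s\ast f_0$ must be identified with this pointwise-defined function, so that elements of $H^{s,p}$ really are functions. The function is continuous, since translation is continuous in $L^{p'}$ when $p'<\infty$; when $p=1$, so $p'=\infty$ and $s>d$, continuity follows instead from \eqref{eq:G_bound_in}, because $G_s$ is then bounded and continuous. We identify the function with the distribution by testing against $\S(\R^d)$ and applying Fubini.

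\textbf{Necessity.} Assume $s\le d/p$. We must show that point evaluation is not bounded on $H^{s,p}$. By translation invariance it suffices to treat evaluation at $0$. The expected obstacle is producing a counterexample uniformly over all cases, including $p=1$, $p=\infty$ and the endpoint $s=d/p$. Since $G_s\ge 0$, we use nonnegative functions $f_0$, so that no cancellation occurs.

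For $1<p\le\infty$ we use duality. Because $f_0\ge 0$ and $G_s\ge 0$,
\begin{equation*}
    \sup_{\norm{f_0}_{L^p}\le 1,\ f_0\ge 0,\ f_0\in C_c}\,(G_s\ast f_0)(0)=\norm{G_s}_{L^{p'}},
\end{equation*}
and this value is infinite by Lemma~\ref{lemma:G_integrability}. Concretely, truncations $f_0^{(n)}=\min(G_s,n)^{p'-1}\mathbbm{1}_{\abs{y}<2}$, suitably normalized in $L^p$, give Schwartz-approximable functions of norm at most $1$ whose values at $0$ blow up. Hence no constant $C_0$ can exist.

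For $p=1$ we have $s\le d$, so $G_s$ is unbounded near $0$ by \eqref{eq:G_bound_in}. We take the approximate identity $f_0^{(n)}=n^d\phi(n\,\cdot)$ with $\phi\ge0$ and $\norm{\phi}_{L^1}=1$. These have norm $1$ in $L^1$. By Fatou's lemma, or simply because $G_s$ is radially decreasing near $0$, the values $(G_s\ast f_0^{(n)})(0)$ tend to infinity. This again rules out a bound $C_0$.

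One subtlety remains when $s\le d/p$: elements of $H^{s,p}$ need not be functions at all, and we must make sense of evaluating at a point. We sidestep this by working only with Schwartz functions. These are dense in $H^{s,p}$ for $p<\infty$ by Lemma~\ref{lemma:schwartz_embbedings}, and for $p=\infty$ the smooth truncations used above are Schwartz functions already. Any bounded evaluation functional would in particular satisfy $\abs{f(0)}\le C_0\norm{f}_{H^{s,p}}$ on the family we constructed, which we have just shown is impossible.
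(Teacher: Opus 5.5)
Your proof is correct and follows essentially the paper's route: you write $f=G_s\ast f_0$ with $\norm{f}_{H^{s,p}}=\norm{f_0}_{L^p}$, identify the size of evaluation at $0$ with $\norm{G_s}_{L^{p'}}$ via Hölder and its converse, and conclude with Lemma~\ref{lemma:G_integrability}, and your identification by testing against $\S(\R^d)$ does the job of the paper's closing injectivity argument. You are more explicit than the paper's one-line chain of equalities about why the supremum blows up when $G_s\notin L^{p'}$ (truncations for $p>1$, approximate identity plus Fatou for $p=1$), though your aside that the $p=\infty$ truncations are already Schwartz is inaccurate, since the indicator of the ball is not smooth; this is harmless, because for $p=\infty$ necessity only concerns $s\le 0$, which you and the paper both implicitly exclude by working with $G_s$ for $s>0$.
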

\begin{proof}
The spaces $H^{s,p}(\R^d)$ are translation invariant. Hence, the point evaluation functionals $\delta_\bullet$ are bounded if and only if $\delta_0$ is bounded. $\delta_0$ is bounded if and only if
\begin{align*}
    \norm{\delta_0}_{(H^{s,p})^\ast} 
    &= \sup_{\norm{f}_{H^{s,p}}=1}\abs{\braket{\delta_0}{f}} 
    \\&= \sup_{\norm{f}_{H^{s,p}}=1}\abs{f(0)} 
    \\&= \sup_{\norm{\J_{-s}f}_{L^{p}}=1}\abs{\J_{s}\J_{-s}f(0)} & \eqref{eq:bessel_norm},\, \J_\bullet\text{ semigroup}
    \\&= \sup_{\norm{g}_{L^{p}}=1}\abs{\J_{s}g(0)} & g=\J_{-s}f
    \\&= \sup_{\norm{g}_{L^{p}}=1}\abs{G_{s}\ast g(0)} & \eqref{eq:J_G-identity}
    \\&= \sup_{\norm{g}_{L^{p}}=1}\abs{\braket{G_s}{g}_{(L^p)^\ast,L^p}} 
    \\&= \norm{G_s}_{(L^p)^\ast} 
    \\&= \norm{G_s}_{L^{p'}} 
\end{align*}
which holds if and only if $s>d/p$.

Assume now that for an $H^{s,p}(\R^d)$ that $f(x)=0$ for all $x\in \R^d$. By Lemma~\ref{lemma:schwartz_embbedings}, $f\in \S'$ and thus
\begin{equation}
    \braket{f}{\psi}_{\S',\S} = \int f(x)\psi(x) dx = 0
\end{equation}
for all $\psi\in \S$. Since $\S$ separates $\S'$, $f=0$ in $\S'(\R^d)$. By Lemma~\ref{lemma:schwartz_embbedings}, this must mean $f=0$ in $H^{s,p}(\R^d)$.
\end{proof}

Last, different Bessel potential spaces relate to each other.

\begin{lemma}[Sobolev-Bessel embedding theorem, Theorem 14.7.1 of \cite{hytonenAnalysisBanachSpaces2023}]\label{lemma:sobolev_embedding}
Let $u,v\in\R$ and $1<p,q<\infty$. $H^{u,p}(\R^d)\hookrightarrow H^{v,q}(\R^d)$ if and only if $p\leq q$ and $u-d/p \geq v-d/q$. 
\end{lemma}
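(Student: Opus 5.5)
Since $\J_{v}$ is an isometric isomorphism $H^{t,r}(\R^d)\to H^{t+v,r}(\R^d)$ for every $t$ and $r$ (by \eqref{eq:bessel_norm} and the semigroup property), I would first reduce the statement to the case $v=0$. Put $s:=u-v$. Then $H^{u,p}\hookrightarrow H^{v,q}$ holds if and only if $H^{s,p}\hookrightarrow H^{0,q}=L^q$, i.e.\ if and only if
\begin{equation*}
\norm{\J_s g}_{L^q}\lesssim \norm{g}_{L^p}\quad\text{for all } g\in L^p(\R^d).
\end{equation*}
It remains to show that this holds if and only if $p\leq q$ and $s\geq d/p-d/q$.

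\textbf{Sufficiency.} Assume $p\le q$ and $s\ge d/p-d/q\ge 0$.
\begin{itemize}
\item If $p=q$ and $s>0$, then $G_s\in L^1$ by Lemma~\ref{lemma:G_integrability}, since $s>0=d/\infty$. Young's inequality with \eqref{eq:J_G-identity} then gives $\norm{\J_sg}_{L^p}\le\norm{G_s}_{L^1}\norm{g}_{L^p}$. The case $s=0$ is trivial.
\item If $p<q$, I would first treat the endpoint $s_0=d/p-d/q\in(0,d)$. By \eqref{eq:G_bound_in} and \eqref{eq:G_bound_out} there is the pointwise bound $0\le G_{s_0}(x)\lesssim \abs{x}^{s_0-d}$ for all $x$. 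Hence $\abs{\J_{s_0}g}\lesssim I_{s_0}\abs{g}$, where $I_{s_0}$ is the Riesz potential. The Hardy--Littlewood--Sobolev inequality gives $I_{s_0}\colon L^p\to L^q$ exactly when $1/q=1/p-s_0/d$ and $1<p<q<\infty$.
\item For $s>s_0$, write $\J_s=\J_{s_0}\J_{s-s_0}$. The factor $\J_{s-s_0}$ is bounded on $L^p$ by the first case, so the bound follows.
\end{itemize}

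\textbf{Necessity of $p\le q$.} Take a nonzero $\phi\in\S$, let $g=\J_{-s}\phi$, and take translates $x_1,\dots,x_N$ with $\abs{x_j-x_k}\to\infty$. Translations commute with $\J_{\pm s}$. Moreover $\J_{-s}\phi$ is smooth with rapid decay, since $(1+\abs{\xi}^2)^{s/2}\hat\phi$ is smooth and rapidly decaying. The tails therefore decouple as the separation grows, and
\begin{equation*}
\Bigl\|\textstyle\sum_j\phi(\cdot-x_j)\Bigr\|_{H^{s,p}}\to N^{1/p}\norm{\phi}_{H^{s,p}},\qquad \Bigl\|\textstyle\sum_j\phi(\cdot-x_j)\Bigr\|_{L^{q}}\to N^{1/q}\norm{\phi}_{L^q}.
\end{equation*}
The embedding inequality then forces $N^{1/q}\lesssim N^{1/p}$ for all $N$, hence $p\le q$.

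\textbf{Necessity of $s\ge d/p-d/q$.} I would use dilation. Choose $\phi\in\S$ with $\hat\phi$ supported in the annulus $\set{1/2\le\abs{\xi}\le 2}$, and set $\phi_\lambda(x)=\phi(\lambda x)$ for $\lambda\ge1$. Then $\hat{\phi_\lambda}$ lives on $\abs{\xi}\sim\lambda$.
\begin{itemize}
\item On that annulus, the multiplier $m_\lambda(\xi)=\lambda^{-s}(1+\abs{\xi}^2)^{s/2}$, smoothly cut off to the annulus, satisfies Mikhlin bounds uniformly in $\lambda$.
\item This gives $\norm{\phi_\lambda}_{H^{s,p}}\lesssim\lambda^{s}\norm{\phi_\lambda}_{L^p}=\lambda^{s-d/p}\norm{\phi}_{L^p}$.
\item On the other side, $\norm{\phi_\lambda}_{L^q}=\lambda^{-d/q}\norm{\phi}_{L^q}$.
\item The embedding then yields $\lambda^{-d/q}\lesssim\lambda^{s-d/p}$ as $\lambda\to\infty$, i.e.\ $s\ge d/p-d/q$.
\end{itemize}
This argument is valid for every real $s$, so it also excludes negative $s$.

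The main obstacle I expect is the endpoint in the sufficiency part. It needs the HLS inequality, which is not among the earlier results and must be imported, together with the pointwise domination $G_{s_0}\lesssim\abs{x}^{s_0-d}$ uniformly on all of $\R^d$. The second delicate point is the uniformity in $\lambda$ of the Mikhlin constants for $m_\lambda$; this comes down to checking $\abs{\partial^\alpha m_\lambda(\xi)}\lesssim\abs{\xi}^{-\abs{\alpha}}$ on the annulus $\abs{\xi}\sim\lambda$.
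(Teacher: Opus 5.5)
Your proof is correct, and it takes a genuinely different route from the paper, which gives no proof of its own. The paper imports the statement from Theorem 14.7.1 of Hyt\"onen--van Neerven--Veraar--Weis, and its discussion only remarks that the equality case $u-d/p=v-d/q$ is handled there by interpolation.

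You give a self-contained classical argument instead. The lifting $\J_v$ reduces the statement to $H^{u-v,p}\hookrightarrow L^q$. Sufficiency then comes from three pieces: Young's inequality off the endpoint; the bound $0\le G_{s_0}(x)\lesssim\abs{x}^{s_0-d}$ together with Hardy--Littlewood--Sobolev at the endpoint $s_0=d/p-d/q$; and the factorisation $\J_s=\J_{s_0}\J_{s-s_0}$ above it. Necessity comes from H\"ormander's translation argument for $p\le q$, and from a frequency-localised dilation for $u-d/p\ge v-d/q$.

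The citation buys brevity and places the result in a general vector-valued function-space framework. Your route buys transparency on exactly the points the paper relies on later. First, it makes the ``only if'' direction explicit. That direction is what Section~\ref{sec:proof} needs to exclude pairs violating \eqref{eq:main_p_q_result} or \eqref{eq:main_u+v_result} when $1<p,q<\infty$. Second, it shows exactly where $p>1$ enters: only through HLS, which fails at $p=1$. This matches the paper's observation that equality in \eqref{eq:main_u+v_result} is lost when $\min(p,q)=1$.

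Your necessity arguments do not actually need $p>1$. Write the cut-off as $\chi(\xi/\lambda)$. Then $m_\lambda(\lambda\eta)=(\lambda^{-2}+\abs{\eta}^2)^{s/2}\chi(\eta)$ is bounded in $C_c^\infty$ uniformly in $\lambda\ge 1$, so the convolution kernels of $m_\lambda$ are uniformly bounded in $L^1$. Using this in place of Mikhlin, your dilation and translation obstructions carry over verbatim to $p=1$, $q<\infty$.
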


\section{Proof of main results}\label{sec:proof}
The definition of the RKBS pair is symmetric. Hence, without loss of generality we can assume that $p\leq q$. 

For $H^{u,p}$ and $H^{v,q}$ to be an RKBS pair, they need both to be RKBS. Lemma~\ref{lemma:bessel_rkbs} shows this exactly the case if and only if the lower bounds in \eqref{eq:main_u_result} and \eqref{eq:main_v_result} hold.

For $H^{u,p}$ and $H^{v,q}$ to be an RKBS pair, they also need to be a dual pair. This means that there must exist a separating bilinear map. Since $\S(\R^d)\subset H^{u,p}(\R^d)$ and $\S(\R^d)\subset H^{v,q}(\R^d)$, any pairing $B$ on them can be restricted to $\S(\R^d)\times \S(\R^d)$. If $f\in H^{u,p}(\R^d)$ and the reproducing property holds, then $f\in \S'(\R^d)$ and 
\begin{align*}
    \braket{f}{\psi}_{\S(\R^d)} 
    &= \int f(x)\psi(x)dx                           & f\in C_b(\R^d)
    \\&= \int B(f,K_s(x,\cdot))\psi(x)dx            & \eqref{eq:reproducing_kernel_B}
    \\&= \int B(f,G_{2s}(\cdot-x))\psi(x)dx         & \eqref{eq:kernel}
    \\&= \int B(f,\tau_xG_{2s})\psi(x)dx            & \tau_x f = f(\cdot-x)
    \\&= B\bigg(f,\int \tau_xG_{2s}\;\psi(x)dx\bigg)  & \text{cont. of }B
    \\&= B\bigg(f,G_{2s}\ast \psi\bigg)             
    \\&= B\bigg(f,\J_{2s}\psi\bigg)                 & \eqref{eq:J_G-identity}   
\end{align*}
holds for all $\psi\in \S(\R^d)$. Since $\J_{2s}$ maps $\S(\R^d)$ to $\S(\R^d)$ isometrically, this implies
\begin{equation}\label{eq:bilinear_form1}
    B(f,\psi) = \braket{f}{\J_{-2s}\psi}_{\S(\R^d)}
\end{equation}
for all $f\in H^{u,p}(\R^d)$ and $\psi\in \S(\R^d)$, and thus, in particular, for $(f,\psi)\in \S(\R^d)\times\S(\R^d)$. An analogous argument using the reproducing property of $H^{v,q}(\R^d)$ implies that $B$ must satisfy
\begin{equation}\label{eq:bilinear_form2}
    B(\psi,g) = \braket{g}{\J_{-2s}\psi}_{\S(\R^d)}
\end{equation}
for all $g\in H^{v,q}(\R^d)$ and $\psi\in \S(\R^d)$, and thus, in particular, for $(\psi,g)\in \S(\R^d)\times\S(\R^d)$. Since $J_{-2s}$ is self-adjoint, \eqref{eq:bilinear_form1} and \eqref{eq:bilinear_form2} are consistent with each other. 

The existence of any continuous pairing $\braket{\cdot}{\cdot}$ on $H^{u,p}(\R^d)\times H^{v,q}(\R^d)$ with the reproducing property requires $\J_{-2s}\colon \S(\R^d)\subset H^{u,p}(\R^d)\to (H^{v,q}(\R^d))^\ast$ to be bounded, because
\begin{align*}
    \norm{\J_{-2s}\psi}_{(H^{v,q})^\ast} 
    &= \sup_{\norm{g}_{H^{v,q}}=1}\abs{\braket{\J_{-2s}\psi}{g}_{(H^{v,q})^\ast,H^{v,q}}} & \text{def. dual norm}
    \\&= \sup_{\norm{g}_{H^{v,q}}=1}\abs{\braket{g}{\J_{-2s}\psi}_{\S',\S}} & g\in \S', \J_{-2s}\psi \in \S
    \\&= \sup_{\norm{g}_{H^{v,q}}=1}\abs{\braket{\psi}{g}} & \eqref{eq:bilinear_form2}
    \\&\lesssim \norm{\psi}_{H^{u,p}} & \text{cont. of }\braket{\cdot}{\cdot}
\end{align*}
holds for all $\psi\in \S(\R^d)\subset H^{u,p}(\R^d)$. 

For $p=\infty$, this cannot be satisfied. If $p=\infty$, then by assumption $q=\infty$. Choose any $\phi\in \S(\R^d)$ with $\hat{\phi}(0)\neq 0$ and set 
\begin{equation}\label{eq:counter_example_p_infty}
    h_R(x) = \phi(x/R)
\end{equation}
for all $R>0$. This function is in $\S(\R^d)$ satisfies 
\begin{align*}
    \norm{\J_{-2s}h_R}_{(H^{v,\infty})^\ast} 
    &= \sup_{\norm{g}_{H^{v,\infty}}=1}\abs{\braket{\J_{-2s}h_R}{g}} 
    \\&= \sup_{\norm{\J_{-v}g}_{L^{\infty}}=1}\abs{\braket{\J_{-2s}h_R}{\J_{v}\J_{-v}g}} & \eqref{eq:bessel_norm},\,\J_\bullet\text{ semigroup}
    \\&= \sup_{\norm{\psi}_{L^{\infty}}=1}\abs{\braket{\J_{-2s}h_R}{\J_{v}\psi}}  & \psi=\J_{-v}g
    \\&= \sup_{\norm{\psi}_{L^{\infty}}=1}\abs{\braket{\J_{v-2s}h_R}{\psi}} & \J_\bullet\text{ self-adjoint}
    \\&= \norm{\J_{v-2s}h_R}_{L^1}
    \\&\geq \widehat{\J_{u+v-2s}h_n}(0)   & \norm{\hat{\cdot}}_{L^\infty} \leq \norm{\cdot}_{L^1}
    \\&= (1+\abs{0}^2)^{-(u+v-2s)/2}\widehat{h_R}(0) & \eqref{eq:J}
    \\&= \widehat{h_R}(0)
    \\&= R^{d}\widehat{\phi}(0) & \text{Fourier scaling property}
    \\&\longrightarrow \infty
\end{align*}
as $R\to\infty$. Hence, $\J_{-2s}\colon \S(\R^d)\subset H^{u,p}(\R^d)\to (H^{v,q}(\R^d))^\ast$ is unbounded for $p=\infty$. 

We assume $p<\infty$ from here on. For $p<\infty$, the boundedness of $\J_{-2s}$ is also sufficient for a continuous pairing $\braket{\cdot}{\cdot}$ on $H^{u,p}(\R^d)\times H^{v,q}(\R^d)$ with the reproducing property to exist. If $\J_{-2s}$ is bounded, then define a bilinear map $\braket{\cdot}{\cdot}$ on $\S(\R^d)\times H^{v,q}(\R^d)$ by \eqref{eq:bilinear_form2}. This bilinear map satisfies
\begin{align*}
    \abs{\braket{\psi}{g}}
    &= \abs{\braket{g}{\J_{-2s}\psi}_{\S',\S}}
    \\&= \abs{\braket{\J_{-2s}g}{\psi}_{\S',\S}} & \J_{-2s}\text{ self-adjoint}
    \\&= \abs{\braket{\J_{-2s}\psi}{g}_{(H^{v,q})^\ast,H^{v,q}}} & \J_{-2s}g\in \S',\psi\in \S
    \\&\leq \norm{J_{-2s}\psi}_{(H^{v,q})^\ast}\norm{g}_{H^{v,q}} &\text{cont. of }\braket{\cdot}{\cdot}_{(H^{v,q})^\ast,H^{v,q}}
    \\&\leq \norm{J_{-2s}}\norm{\psi}_{H^{u,p}}\norm{g}_{H^{v,q}} &\J_{-2s}\text{ bounded}
\end{align*}
for all $g\in H^{v,q}(\R^d)$ and $\psi\in \S(\R^d)\subset H^{u,p}(\R^d)$. Since $\S(\R^d)$ is dense in $H^{u,p}(\R^d)$ by Lemma~\ref{lemma:schwartz_embbedings}, $\S(\R^d)\times H^{v,q}(\R^d)$ is dense in  $H^{u,p}(\R^d) \times H^{v,q}(\R^d)$ and there exists a unique extension of $\braket{\cdot}{\cdot}$ which is continuous on $H^{u,p}(\R^d) \times H^{v,q}(\R^d)$.

Since $p<\infty$, $\J_{-2s}$ extends uniquely to $\J_{-2s}\colon H^{u,p}(\R^d)\to (H^{v,q}(\R^d))^\ast$ by density of $\S(\R^d)$ in $H^{u,p}(\R^d)$. Since $\J_\bullet$ is an isometry, we get that this extension is bounded if and only if $H^{u-2s,p}(\R^d)\hookrightarrow (H^{v,q}(\R^d))^\ast$. We will show that this holds if and only if \eqref{eq:main_p_q_result} and \eqref{eq:main_u+v_result} hold. We consider the cases $q<\infty$ and $q=\infty$ separately. However, first we observe that $\J_{-2s}$ bounded if and only if $\norm{\J_{u+v-2s}f}_{(L^q)^\ast}\lesssim \norm{f}_{L^p}$ for all $f\in L^{p}(\R^d)$, because
\begin{align*}
    \norm{f}_{L^p} 
    &=\norm{\J_{u}f}_{H^{u,p}}& \eqref{eq:bessel_norm}
    \\&\gtrsim  \norm{\J_{u-2s}f}_{(H^{v,q})^\ast} & \J_{-s}\text{ bounded}
    \\&= \sup_{\norm{g}_{H^{v,q}}=1}\abs{\braket{\J_{u-2s}f}{g}_{(H^{v,q})^\ast,H^{v,q}}}
    \\&= \sup_{\norm{\J_{-v}g}_{L^{q}}=1}\abs{\braket{\J_{u-2s}f}{\J_{v}\J_{-v}g}_{(H^{v,q})^\ast,H^{v,q}}} & \eqref{eq:bessel_norm},\,\J_\bullet\text{ semigroup}
    \\&= \sup_{\norm{h}_{L^{q}}=1}\abs{\braket{\J_{u-2s}f}{\J_{v}h}_{(H^{v,q})^\ast,H^{v,q}}} & h= \J_{-v}g
    \\&= \sup_{\norm{h}_{L^{q}}=1}\abs{\braket{\J_{u+v-2s}f}{h}_{(L^{q})^\ast,L^{q}}} & \J_\bullet\text{ self-adjoint}
    \\&=\norm{\J_{u+v-2s}f}_{(L^{q})^\ast}
\end{align*}
Any counter examples in the following deal with $f\in L^{p}(\R^d)$ such that the inequality $\norm{\J_{u+v-2s}f}_{(L^q)^\ast}\lesssim \norm{f}_{L^p}$ does not hold.

For $q<\infty$, Lemma~\ref{lemma:dual} implies that $(H^{v,q}(\R^d))^\ast = H^{-v,q'}(\R^d)$. If $p>1$, then both $p,q\in (1,\infty$) and then Lemma~\ref{lemma:sobolev_embedding} shows that the embedding holds if and only if
\begin{subequations}
\begin{align}
    p &\leq q' \\
    u -2s - d/p &\geq -v-d/q'
\end{align}
\end{subequations}
This can be rewritten into \eqref{eq:main_p_q_result} and \eqref{eq:main_u+v_result}. If $p=1$, then $\min(p,q)=1$ and $\max(p,q)<\infty$. If \eqref{eq:main_u+v_result} holds with strict inequality, then $G_{u+v-2s}\in L^{q'}(\R^d)$ and
\begin{align*}
    \norm{\J_{u+v-2s}f}_{(L^{q})^\ast} 
    &= \norm{G_{u+v-2s}\ast f}_{L^{q'}} & \eqref{eq:J_G-identity}
    \\&\leq \norm{G_{u+v-2s}}_{L^{q'}}\norm{f}_{L^1} & \text{Young's conv. ineq.}
\end{align*}
shows that $J_{-2s}$ is bounded. If \eqref{eq:main_u+v_result} doesn't hold with strict inequality, then we consider two cases: $u+v-2s \leq 0$ and $u+v-2s\in (0,d/q']$. The latter is only relevant for $q>1$. If $u+v-2s \leq 0$, then there exists an $\alpha\in (0,2s-u-v)$. Take any such $\alpha$ and observe that $G_\alpha\in L^1(\R^d)$ by Lemma~\ref{lemma:G_integrability}. However, $\J_{u+v-2s}G_\alpha$ is only a distribution and, in particular, not in $L^{q'}(\R^d)$. Hence, $\J_{-2s}$ cannot be bounded for $p=1$ and $u+v<2s$. If $u+v-2s\in (0,d/q']$, then the operator is unbounded by a standard mollifier approach. Choose any nonzero and nonnegative $\phi\in \S(\R^d)$ with $\norm{\phi}_{L^1}=1$ and set 
\begin{equation}
    h_\epsilon(x) = \epsilon^{-d}\phi(x/\epsilon)
\end{equation}
for all $\epsilon>0$. This makes $h_\epsilon$ an approximate identity. Hence,
\begin{equation}
    G_{u+v-2s}\ast h_{\epsilon} \xrightarrow{\epsilon\to 0}G_{u+v-2s}
\end{equation}
pointwise. Suppose for contradiction that the operator $\J_{-2s}$ is bounded. This implies
\begin{align*}
    \norm{G_{u+v-2s}}_{L^{q'}} 
    &\leq \liminf_{\epsilon\to 0}\norm{G_{u+v-2s}h_{\epsilon}}_{L^{q'}} & \text{Fatou's lemma}
    \\&= \norm{\J_{u+v-2s} h_{\epsilon}}_{L^{q'}}   & \eqref{eq:J_G-identity}
    \\&= \norm{\J_{u-2s}h_{\epsilon}}_{H^{-v,q'}}   & \eqref{eq:bessel_norm}
    \\&\lesssim \norm{\J_{u}h_{\epsilon}}_{H^{u,1}} & \J_{-2s}\text{ bounded}
    \\&= \norm{h_{\epsilon}}_{L^{1}}                & \eqref{eq:bessel_norm}
    \\&= 1                                          & h_\epsilon\text{ approx. id.}
\end{align*}
and thus $G_{u+v-2s}\in L^{q'}$. However, $G_{u+v-2s}\not\in L^{q'}$ for the given $u+v-2s$ according to Lemma~\ref{lemma:G_integrability}. By contradiction, $\J_{-2s}$ must be unbounded for $p=1$ and $u+v-2s\in (0,d/q']$.

For $q=\infty$, we consider the cases $p>1$ and $p=1$. If $p>1$, then consider an $\psi\in\S(R^d)$ such that $\hat{\psi}(0)\neq 0$ and define the sequence $h_\bullet$ by $h_n(x) = n^{-d/p}\psi(x/n)$ for all $n\in \N$. This sequence is constant in $L^p(\R^d)$, i.e. $\norm{h_n}_{L^p}=\norm{\psi}_{L^p}$, because
\begin{equation}
    \norm{h_n}^p_{L^p} 
    = \int_{\R^d}\abs{h_n(x)}^pdx
    = \int_{\R^d}n^{-d}\abs{\psi(x/n)}^pdx
    = \int_{\R^d}\abs{\psi(y)}^pdy
    = \norm{\psi}_{L^p}^p < \infty
\end{equation}
However, 
\begin{align*}
    \norm{\J_{u+v-2s}h_n}_{L^1} 
    &\geq \widehat{\J_{u+v-2s}h_n}(0)   & \norm{\hat{\cdot}}_{L^\infty} \leq \norm{\cdot}_{L^1}
    \\&= (1+\abs{0}^2)^{-(u+v-2s)/2}\widehat{h_n}(0) & \eqref{eq:J}
    \\&= \widehat{h_n}(0)
    \\&= n^{d(1-1/p)}\widehat{\psi}(0)  & \text{Fourier scaling property}
    \\&\longrightarrow \infty
\end{align*}
as $n\to\infty$. Hence, $\J_{-2s}$ cannot be bounded for $p>1$. If $p=1$ and $u+v<2s$, then there exists an $\alpha\in (0,2s-u-v)$. Take any such $\alpha$ and observe that $G_\alpha\in L^1(\R^d)$ by Lemma~\ref{lemma:G_integrability}. However, $\J_{u+v-2s}G_\alpha$ is only a distribution and, in particular, not in $L^1(\R^d)$. Hence, $\J_{-2s}$ cannot be bounded for $p=1$ and $u+v<2s$. If $p=1$ and $u+v> 2s$, then Young's inequality implies that
\begin{equation}
    \norm{\J_{u+v-2s}h}_{L^1} = \norm{G_{u+v-2s}\ast h}_{L^1} \leq \norm{G_{u+v-2s}}_{L^1}\norm{h}_{L^1}
\end{equation}
with $\norm{G_{u+v-2s}}_{L^1}$ finite because $G_{u+v-2s}\in L^1(\R^d)$ by Lemma~\ref{lemma:G_integrability}. If $p=1$ and $u+v=2s$, then $\J_{u+v-2s}=Id$ and the bound holds trivially. Hence, $\J_{-2s}$ is bounded for $p=1$ and $u+v\geq 2s$. This finishes the if and only if, and the conditions are the same as \eqref{eq:main_p_q_result} and \eqref{eq:main_u+v_result}.

What remains is showing that the pairing induced by $\J_{-2s}$ satisfies the reproducing properties and that $K_s$ is the kernel for the RKBS pair $H^{u,p}(\R^d)$ and $H^{v,q}(\R^d)$. Since the Bessel potential spaces are translation-invariant and the kernel symmetric, it is sufficient for the latter to show that $\J_{-u}G_{2s}=G_{2s-u}\in L^p$ and $\J_{-v}G_{2s}=G_{2s-v}\in L^q$. Lemma~\ref{lemma:sobolev_embedding} implies that these are true if and only if 
\begin{subequations}
\begin{align}
    2s-u &> d/p' \\
    2s-v &> d/q'
\end{align}
\end{subequations}
which corresponds to the upper bounds for $u$ and $v$ in \eqref{eq:main_u_result} and \eqref{eq:main_v_result}.
The continuous pairing we derived satisfies the reproducing property for $H^{u,p}(\R^d)$, because $K_s(x,\cdot)\in H^{v,q}(\R^d)$ and thus
\begin{align*}
    \braket{\psi}{K_s(x,\cdot)} 
    &= \braket{K_s(x,\cdot)}{\J_{-2s}\psi}_{\S',\S} & \eqref{eq:bilinear_form2}
    \\&= \braket{\J_{-2s}K_s(x,\cdot)}{\psi}_{\S',\S}   &   \J_\bullet\text{ self-adjoint}
    \\&= \braket{\J_{-2s}G_{2s}(\cdot-x)}{\psi}_{\S',\S} & \eqref{eq:kernel}
    \\&= \braket{\J_{-2s}\tau_xG_{2s}}{\psi}_{\S',\S}   & \tau_x f = f(\cdot-x)
    \\&= \braket{\tau_x\J_{-2s}G_{2s}}{\psi}_{\S',\S}   & \tau_x,\J_\bullet\text{ commute} 
    \\&= \braket{\tau_x\delta_0}{\psi}_{\S',\S} & \J_\bullet\text{ semigroup}
    \\&= \braket{\delta_x}{\psi}_{\S',\S}
    \\&= \psi(x)
\end{align*}
holds for all $\psi\in\S(\R^d)$ and the density of $\S(\R^d)$ in $H^{u,p}(\R^d)$ implies this extends to all $f\in H^{u,p}(\R^d)$. The continuous pairing we derived satisfies the reproducing property for $H^{v,q}(\R^d)$, because $K_s(\cdot,y)\in H^{u,p}(\R^d)$ and thus
\begin{align*}
    \braket{K_s(\cdot,x)}{g} 
    &= \lim_{n\to\infty}\braket{\psi_n}{g}  & \psi_n\to K_s(\cdot,y)
    \\&= \lim_{n\to\infty}\braket{g}{\J_{-2s}\psi_n}_{\S',\S}   & \eqref{eq:bilinear_form2}
    \\&= \lim_{n\to\infty}\braket{\J_{-2s}g}{\psi_n}_{\S',\S}   & \J_\bullet\text{ self-adjoint}
    \\&= \lim_{n\to\infty}\braket{\J_{-2s}g}{\psi_n}_{H^{-u,p'},H^{u,p}}   & \J_{-2s}g\in H^{v+2s,q}\hookrightarrow H^{-u,p'}=(H^{u,p})^\ast
    \\&= \braket{\J_{-2s}g}{K_s(\cdot,y)}_{H^{-u,p'},H^{u,p}}   & \psi_n\to K_s(x,\cdot)
    \\&= \braket{\J_{-2s}g}{G_{2s}(\cdot-y)}_{H^{-u,p'},H^{u,p}}   & \eqref{eq:kernel},\; G_{\bullet}\text{ symmetric}
    \\&= \braket{\J_{-2s}g}{\tau_y G_{2s}}_{H^{-u,p'},H^{u,p}}   & \tau_y f = f(\cdot-y)
    \\&= \braket{\J_{u-2s}g}{\J_{-u}\tau_y G_{2s}}_{L^{p'},L^{p}}   & \eqref{eq:bessel_norm},\, \J_\bullet\text{ semigroup}
    \\&= \braket{\J_{u-2s}g}{\tau_y \J_{-u}G_{2s}}_{L^{p'},L^{p}}   & \tau_x,\J_\bullet\text{ commute}
    \\&= \braket{\J_{u-2s}g}{\tau_y G_{2s-u}}_{L^{p'},L^{p}}   & \J_\bullet\text{ semigroup},\,\eqref{eq:J_G-identity}
    \\&= G_{2s-u}\ast \J_{u-2s}g(x)   & 
    \\&= g(x)   & \eqref{eq:J_G-identity},\, \J_\bullet\text{ semigroup}
\end{align*}
holds for all $g\in H^{v,q}(\R^d)$ and sequences $\psi_n\to K_s(\cdot,y)$ in $H^{u,p}(\R^d)$.

\section{Discussion}
In the work, we answered which distinct dual pairs of RKBSs $H^{u,p}(\R^d),H^{v,q}$ have $K_s(x,y)=G_{2s}(x-y)$ as kernel. Several things are noteworthy here.

\paragraph{Lower bound for $s$}
For the Bessel potential RKHS with kernel $K_s$ we need to have $s>d/2$. The work by Fasshauer et al. has the same bound. Although our main results in Theorem~\ref{th:main_results} don't state an explicit requirement on $s$, the bound $s>d/2$ still is needed. When $s\leq d/2$, \eqref{eq:main_u_result} and \eqref{eq:main_v_result} cannot be satisfied for any $u,v$. That is it needed should not come as a great surprise, since $s>d/2$ corresponds to the case that $G_{2s}\in L^1(\R^d)$, which is a fact most lemmata rely on. 

\paragraph{Non-uniqueness of the kernel for a given pair}
Moore-Aronzaijn established a connection between kernels and reproducing kernel Hilbert spaces. Theorem~\ref{th:main_results} shows that for RKBS pairs not even two RKBSs establish a unique kernel. For fixed $(u,p,v,q)$, we can rewrite the conditions on $s$. This gives the interval
\begin{equation}
    \frac{1}{2}\max(u+\frac{d}{p'},v+\frac{d}{q'}) < s \leq \frac{u+v-d(1/q+1/p-1)}{2}
\end{equation}
with the second inequality strict for $\min(p,q)=1$ and $\max(p,q)<\infty$. Whenever this interval contains more than one point, the pair $H^{u,p}(\R^d),H^{v,q}(\R^d)$ has multiple kernels. For example, the spaces $H^{3,2}(\R)$ and $H^{3,2}(\R)$ form an RKBS pair with kernels $K_2$ and $K_3$, and the spaces $H^{2,1}(\R)$ and $H^{2,1}(\R)$ form an RKBS pair with kernels $K_{5/4}$ and $K_{6/5}$.

\paragraph{Non-Hölder pairs}
In \cite{zhang_reproducing_2009}, the reflexive RKBS are formed use the full dual or the Hölder conjugate space for the diamond space. Theorem~\ref{th:main_results} shows that this is not needed. For example, the spaces $H^{3,1}(\R)$ and $H^{2,2}(\R)$ form an RKBS pair with kernel $K_2$. 

\paragraph{Non-Hilbertian self-pairs}
Reproducing kernel Hilbert spaces can be identified as a special kind of RKBS pair, because $\H,\H$ is an RKBS pair when $\H$ is an RKHS. Theorem~\ref{th:main_results} shows that there are RKBSs which are not RKHS that form an RKBS pair with themselves. Examples include the aforementioned $H^{2,1}(\R)$ and $H^{3,2}(\R)$. For a pair of Bessel potential spaces to be an RKBS \textit{self-pair}, i.e. an RKBS pair with $\B=\B^\di$, the simplified conditions
\begin{subequations}
\begin{align}
    p&\leq 2 \\
    d/p &< u < 2s-d/p' \\
    2u &\geq 2s + d(2/p-1) \label{eq:self_dual_r_s_bound}
\end{align}
\end{subequations}
need to hold, and the interval of admissible $s$ given fixed $(u,p)$ simplifies to
\begin{equation}\label{eq:self_dual_interval}
    \frac{u}{2}+\frac{d}{2p'} < s \leq u-d(\frac{1}{p}-\frac{1}{2})
\end{equation}
where the inequalities in both \eqref{eq:self_dual_r_s_bound} and \eqref{eq:self_dual_interval} are strict for $p=1$.
The admissible range of $s$ for $H^{u,2}(\R^d)$ is given by
\begin{equation}
    u/2 + d/4 < s \leq u
\end{equation}
and the kernel of the RKHS $H^{u,2}(\R^d)$ corresponds to the upper limit of this, i.e. it has $s=u$. For $p<2$, the upper bound of \eqref{eq:self_dual_interval} is strictly less than $u$. Hence, the RKHS $H^{u,2}(\R^d)$ is the only self-pair that has allows for a kernel with the same regularity as the space. 

\paragraph{Norming dual pairs}
A dual pair $\B,\B^\di$ is called \textit{norming} when the norms satisfy
\begin{subequations}
\begin{align}
    \norm{f}_\B &= \sup_{\norm{g}_{\B^\di}=1}\abs{\braket{g}{f}} \\
    \norm{g}_{\B^\di}  &= \sup_{\norm{f}_{\B}=1}\abs{\braket{g}{f}}
\end{align}    
\end{subequations}
Assuming $p,q\in(1,\infty)$, $H^{u,p}(\R^d)$ and $H^{v,q}(\R^d)$ is a norming pair if and only if
\begin{subequations}
\begin{align}
    p &= q'\\
    u+v &= 2s
\end{align}
\end{subequations}
because
\begin{align*}
    \norm{f}_{H^{u,p}} 
    &= \sup_{\norm{g}_{H^{v,q}}=1}\abs{\braket{g}{f}}           & \text{norming}
    \\&= \sup_{\norm{g}_{H^{v,q}}=1}\abs{\braket{\J_{-2s}g}{f}_{H^{-u,p'},H^{u,p}}}     & \text{extension of }\eqref{eq:bilinear_form2}
    \\&= \sup_{\norm{h}_{H^{v-2s,q}}=1}\abs{\braket{h}{f}_{H^{-u,p'},H^{u,p}}}  &    \eqref{eq:bessel_norm},\,h=\J_{-2s}g
    \\&= \sup_{\norm{h}_{H^{v-2s,q}}=1}\abs{\braket{\J_{u}h}{\J_{-u}f}_{L^{p'},L^{p}}} 
    \\&= \sup_{\norm{h}_{H^{v-2s,q}}=1}\abs{\int h(x)f(x)dx} & \J_\bullet\text{ self-adjoint},\,\J_\bullet\text{ semigroup}
    \\&= \sup_{\norm{h}_{H^{v-2s,q}}=1}\abs{\braket{\J_{2s-v}h}{\J_{v-2s}f}_{L^{q},L^{q'}}}  & \J_\bullet\text{ semigroup},\, \J_\bullet\text{ self-adjoint}
    \\&= \sup_{\norm{h}_{H^{v-2s,q}}=1}\abs{\braket{h}{f}_{H^{v-2s,q},H^{2s-v,q'}}} 
    \\&= \norm{f}_{H^{2s-v,q'}}     & \text{Lemma}~\ref{lemma:dual}
\end{align*}
holds for all $f\in H^{u,p}$ and likewise
\begin{equation}
    \norm{g}_{H^{v,q}} = \norm{g}_{H^{2s-r,p'}}
\end{equation}
for all $g\in H^{v,q}$. These values are exactly the cases when \eqref{eq:main_p_q_result} and \eqref{eq:main_u+v_result} achieve equality. The only norming RKBS self-pair is the pair corresponding to the RKHS. In the general case, choosing two of the triplet $(H^{u,p}(\R^d),H^{v,q}(\R^d),K_s)$ does not imply the third. 
If we only consider norming dual pairs for the RKBS pairs, then we see that a \textit{2-out-of-3} property holds: knowing two of the three of the triplet $(H^{u,p}(\R^d),H^{v,q}(\R^d),K_s)$ fixes the third.

\paragraph{Vanishing at infinity for $\max(p,q)=\infty$}
In Section~\ref{sec:proof}, we distinguished between $q=\infty$ and $q<\infty$. The proof for $q<\infty$ relied on the Schwartz space $\S(\R^d)$ being dense in both $H^{u,p}(\R^d)$ and $H^{v,q}(\R^d)$. For $q=\infty$, the Schwartz space $\S(\R^d)$ is not dense in $H^{v,q}(\R^d)$. The sup-closure of $\S(\R^d)$ is $C_0(\R^d)$ and not $L^\infty(\R^d)$ or $C_b(\R^d)$, since $\S(\R^d)$ consists of smooth functions vanishing at infinity and includes $C_c^\infty(\R^d)$, the smooth compactly-supported functions. Hence, an alternative space we could consider is 
\begin{equation}
    H^{s}_0(\R^d) = \J_{-s}C_0(\R^d) = \set{ f\in \S'(\R^d) \given \exists f_0\in C_0(\R^d)\colon \J_sf_0 = f} 
\end{equation}
Using an analogous proof strategy, we can prove that $H^{u,1}(\R^d)$ and $H^{v,\infty}_0(\R^d)$ under the conditions of Theorem~\ref{th:main_results}.  

\paragraph{Strict inequality for $\max(p,q)<\infty$ and $\min(p,q)=1$}
In Theorem~\ref{th:main_results}, \eqref{eq:main_u+v_result} is strict for $\max(p,q)<\infty$ and $\min(p,q)=1$. This has to do with the relevant part of the proof reducing to asking whether $H^{2s-v,q'}(\R^d)$ embeds into $H^{u,1}(\R^d)$, assuming $p\leq q$. In Lemma~\ref{lemma:sobolev_embedding}, the equality constraint case is proven using an interpolation argument. Here, that doesn't work due to the approximate identities on $L^1(\R^d)$. This strict inequality shows that Lemma~\ref{lemma:sobolev_embedding} does not extend with inequality to $1\leq p,q<\infty$, unlike the case for Sobolev spaces $W^{k,p}(\R^d)$.

\paragraph{Comparison to $\ell^p,\ell^q$ RKBS pairs}
In the previous paragraphs, we discussed several ways in which the RKBS pairs for the Bessel potential spaces differ from the Bessel potential RKHS. The ways these differ is not limited to just the Bessel potential spaces. Consider as example the $\ell^p$ spaces interpreted as functions from $\N$ to $\R$ and the kernel
\begin{equation}\label{eq:sequence_kernel}
    K_\N\colon \N\times\N\to\R,\; (n,m)\mapsto \begin{cases}
        1 & n=m \\ 0 & n\neq m
    \end{cases}
\end{equation}
The spaces $\ell^p$ and $\ell^q$ form an RKBS pair with kernel $K_\N$ if and only if
\begin{equation}
    \frac{1}{p}+\frac{1}{q}\geq 1
\end{equation}
We see that $\ell^1$ and $\ell^3$ form a non-Hölder RKBS pair, $\ell^p$ with $p\leq 2$ form RKBS self-pairs, $\ell^p$ and $\ell^q$ are a norming RKBS pair if and only if $q$ is the Hölder conjugate of $p$, and the space $c_0$ is an alternative space for $\ell^\infty$. 

\section{Conclusion}\label{sec:conclusion}
This work deals with the question of which Bessel potential spaces $H^{u,p}(\R^d)$ have the kernel
\begin{equation}
    K_s(x,y) = G_{2s}(x-y)
\end{equation}
as reproducing kernel. Theorem~\ref{th:main_results} provides a full characterization of the pairs of Bessel potential spaces $H^{u,p}(\R^d)$ and $H^{v,q}(\R^d)$ have $K_s$ as kernel. 

\textbf{Future work}
Theorem~\ref{th:main_results} shows that the relation between kernel and spaces is non-unique: given pairs of Bessel potential spaces can be equipped with multiple kernels by choosing a different pairing, and every kernel can serve as kernel for multiple pairs of Bessel potential spaces. This leaves open the question of which to choose in practice. Understanding the impact of the choice on the representer theorem and, in particular, whether the solutions to optimization problems of the form
\begin{equation}
    \min_{f\in H^{u,p}(\R^d)}\sum_{n=1}^\infty \abs{f(x_n)-y_n}^2 + \lambda \norm{f}_{H^{u,p}}^p
\end{equation}
have meaningfully different solution, would provide clarity on these choices. 

Next, Theorem~\ref{th:main_results} only considers the Bessel potential spaces. These spaces are a special case of the larger family of Triebel-Lizorkin $F^{s}_{p,q}(\R^d)$: The Bessel potential spaces are isomorph to the special case $q=2$. It is natural to ask which Triebel-Lizorkin spaces admit $K_s$ as kernel. 

Last, Theorem~\ref{th:main_results} shows that $K_s$ is the kernel of a continuum of Banach space pairs. This raises the question of whether every positive definite kernel admits a similarly rich family. The sequence kernel $K_\N$ defined in \eqref{eq:sequence_kernel} suggests the latter: there the admissible $\ell^p,\ell^q$ pairs are characterized by the inequality $1/p + 1/q \geq 1$. Determining whether such a clean characterization persists for kernels without an explicit convolution structure would shed light on the extent to which Theorem~\ref{th:main_results} reflects a universal property of reproducing kernels.

\printbibliography

\end{document}